\newcommand{\F}{\mathbb{F}}
\newcommand{\OO}{\mathbb{O}}
\newcommand{\Q}{\mathbb{Q}}
\newcommand{\R}{\mathbb{R}}
\newcommand{\Z}{\mathbb{Z}}
\newcommand{\abs}[1]{\lvert#1\rvert}
\newcommand{\mc}[1]{\mathcal{#1}}
\newcommand{\msf}[1]{\mathsf{#1}}
\newcommand{\op}[1]{\operatorname{\msf{#1}}}
\newcommand{\vf}{\varphi}
\DeclareMathOperator{\tr}{\msf{tr}}
\newtheorem{theorem}{Theorem}
\newtheorem{lemma}[theorem]{Lemma}
\theoremstyle{definition}
\newtheorem{topic}[theorem]{}
\theoremstyle{remark}
\newtheorem{remark}[theorem]{Remark}
\begin{document}
%
%
%
%
%
%
\title[Octonions]{The octonions as a twisted group algebra}
\author{Tathagata Basak}
\address{Department of Mathematics\\Iowa State University, \\Ames, IA 50011}
\email{tathagat@iastate.edu}
\urladdr{http://orion.math.iastate.edu/tathagat}
\keywords{octonions, integral orders, non-associative algebra, finite field, 2-cocycle, twisted group algebra}
\subjclass[2010]{%
Primary: 11R52
,16S35
; Secondary: 17A35
,16H99
}
\date{Jan 20, 2017}
\begin{abstract}
We show that the octonions can be defined as the $\R$-algebra
 with basis $\lbrace e^x \colon x \in \F_8 \rbrace$ and multiplication given by
 $e^x e^y = (-1)^{\vf(x,y)}e^{x + y}$, where $\vf(x,y) = \tr(y x^6)$.
While it is well known that the octonions can be described as a
twisted group algebra,
our purpose is to point out that this is a useful description.
We show how the basic properties of the octonions follow easily from our definition.
We give a uniform description of the sixteen orders of integral octonions containing
the Gravesian integers, and a computation-free proof of their existence.
\end{abstract}
\maketitle
Let $\F_q$ be a field with $q$ elements, and $\F_q^* = \F_q - \lbrace 0 \rbrace$.
Let $G$ be a group and let $R$ be a commutative ring. Fix a function $\sigma\colon G \times G \to R$.
Let
$R_{\sigma}[G]$ be the group algebra of $G$ twisted by $\sigma$. This means that
$R_{\sigma}[G]$ is a free $R$-module with a basis $\lbrace e^g \colon g \in G \rbrace$
indexed by the elements of $G$ with an $R$-bilinear multiplication map
$R_{\sigma}[G] \times R_{\sigma}[G] \to R_{\sigma}[G]$
defined by $e^x e^y = \sigma(x,y) e^{x + y}$.
\par
Let $\op{Fr}: \F_8 \to \F_8$ and $\tr: \F_8 \to \F_2$ be the maps
$\op{Fr}(x) = x^2$ and  $\tr(x) =  x + \op{Fr}(x) + \op{Fr}^2(x)$.
Define $\varphi: \F_8 \times \F_8 \to \F_2$ by
\begin{equation*}
\varphi(x,y) = \tr( y \bar{x} ) \text{\; where \;}
\bar{x} = x^6 = \begin{cases} x^{-1} & \text{ if \;} x \neq 0, \\
0 & \text{if \;} x = 0.
\end{cases}
\end{equation*}
Let $\sigma(x,y) = (-1)^{\varphi(x,y)}$.
We want to study the $R$-algebra $\OO(R) = R_{\sigma}(\F_8)$.
This algebra is not associative, since $\sigma$ is not a $2$-cocycle.
Clearly $e^0$ is the $2$-sided identity of $\OO(R)$.
Identify $R$ inside $\OO(R)$ by $x \mapsto x e^0$.
Write $\OO = \OO(\R)$.
We shall prove the following theorem.
\begin{theorem}
$\OO$ is the real non-associative division algebra of octonions.
\label{t-O}
\end{theorem}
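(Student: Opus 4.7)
My plan is to show that $\OO$ is a composition algebra with identity over $\R$, with a positive-definite norm, of dimension $8$, and then invoke Hurwitz's classification to identify it with the real octonions. The central combinatorial input is the identity
\[
\tr(t + t^{-1}) = 1 \quad \text{for all } t \in \F_8 \setminus \{0, 1\},
\]
which I would verify by a short direct check in $\F_8$ (using $t^7 = 1$ and $\tr(1) = 1$).

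First I would introduce conjugation on $\OO$ by $\overline{e^0} = e^0$, $\overline{e^x} = -e^x$ for $x \neq 0$, extended $\R$-linearly, and verify the anti-involution law $\overline{ab} = \overline b \cdot \overline a$ on the basis. On basis elements this reduces to $\vf(x,y) + \vf(y,x) = 1$ for $x, y \in \F_8^*$ with $x \neq y$, i.e., $\tr(t + t^{-1}) = 1$ with $t = yx^{-1}$. Since $e^0 \cdot e^0 = e^0$ and $(e^x)^2 = (-1)^{\tr(1)} e^0 = -e^0$ for $x \neq 0$, a direct computation using the anti-involution shows that $N(a) := a \overline a = \sum_x a_x^2$ for $a = \sum_x a_x e^x$, so $N$ is a positive-definite quadratic form and $\{e^x\}$ is orthonormal for the polarization $\langle\cdot,\cdot\rangle$.

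The main step is to prove $N(ab) = N(a) N(b)$. Polarizing in both factors gives the quadrilinear identity
\[
\langle xy, x'y' \rangle + \langle xy', x'y \rangle = 2 \langle x, x' \rangle \langle y, y' \rangle,
\]
which it suffices to check for $x = e^a$, $x' = e^b$, $y = e^c$, $y' = e^d$. In characteristic $2$, $a + c = b + d$ iff $a + b = c + d$, so the left side simplifies to $\delta_{a+b,\, c+d}\bigl(\sigma(a,c)\sigma(b,d) + \sigma(a,d)\sigma(b,c)\bigr)$ while the right side is $2\delta_{a,b}\delta_{c,d}$. The essential case is $a \neq b$ with $a + b = c + d$, where the identity becomes $\vf(a,c) + \vf(b,d) + \vf(a,d) + \vf(b,c) = 1$; expanding and using $c + d = a + b$ collapses this, when $a, b, c, d$ are all nonzero, to $\tr(ab^{-1} + ba^{-1}) = 1$, the same key identity. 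A few degenerate subcases (one of the four indices equal to $0$) are handled separately and each also reduces to $\tr(1) = 1$.

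Having established that $\OO$ is a real composition algebra of dimension $8$ with identity and positive-definite norm, Hurwitz's theorem yields the isomorphism with the octonions; non-associativity is already evident, and the division algebra property follows from $N(ab) = N(a) N(b)$ together with positive definiteness. The main obstacle is the verification of the multiplicativity of $N$: it is the only place requiring case analysis, and all the sign identities that arise collapse to $\tr(t + t^{-1}) = 1$ for $t \in \F_8 \setminus \{0,1\}$, which is the combinatorial heart of the construction.
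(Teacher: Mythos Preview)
Your proposal is correct. It coincides with the paper's \emph{second} proof (the one given in the ``Conjugate and norm'' discussion), not with the short official proof that immediately follows the statement. That official proof never touches the norm: it uses Lemma~\ref{l-vf}(d) to get the index-shift and doubling symmetries $e_i e_j = e_k \Rightarrow e_{i+1}e_{j+1}=e_{k+1}$ and $e_{2i}e_{2j}=e_{2k}$, together with $e_1 e_2 = e_4$ and the relations in \eqref{eq-ex}, and then simply quotes Baez that these determine the standard octonion multiplication table. Your route and the paper's second route both hinge on exactly the same combinatorial core, namely $\tr(t+t^{-1})=1$ for $t\in\F_8\setminus\{0,1\}$ (this is how the paper proves Lemma~\ref{l-vf}(b)); the only organizational difference is that you polarize $N(ab)=N(a)N(b)$ to the quadrilinear identity $\langle xy,x'y'\rangle+\langle xy',x'y\rangle=2\langle x,x'\rangle\langle y,y'\rangle$ and check it on basis vectors, whereas the paper expands $N(ab)=\sum_{x,y,z}c_{x,y,z}$ directly and kills the $x\neq y$ terms by the substitution $z\mapsto z+x+y$. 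Your polarized version makes the reduction to the single trace identity slightly more transparent; the paper's direct expansion avoids the small case analysis for indices equal to~$0$. Either way the invocation of Hurwitz at the end is the same.
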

While it is well known that $\OO$ can be described in the form $\R_{\sigma}[(\Z/2\Z)^3]$
for some $\sigma$ (see \cite{Ba:TO} or \cite{AM:QS}),
the formula for $\sigma$ given above seems to be new.
This definition makes certain symmetries visible: for example, we have
invariance under the Frobenius action and we have
$\sigma( a x, a y) = \sigma (x, y)$ for all $a \in \F_8^*$.
In the first part of this note,
we shall indicate how the basic properties of octonions become obvious
from our definition.
\par
In the second part of this note we study the orders of integral octonions containing $\OO(\Z)$.
It has long been known that there are sixteen such orders. Among them are
seven maximal ones (see \cite{Di:AN},\cite{Co:IC}), each forming a copy of the $E_8$ root lattice
with norm $\langle x , x \rangle = 2 x x^*$.
However, the definition of these maximal orders involve making an
awkward choice, and proving that they are multiplicatively closed
requires some unilluminating calculation (see \cite{C-Sm:OQ}).
A uniform description of the sixteen orders has been missing.
The description of $\OO$ as a twisted group algebra
allows us to give a uniform description of the $16$ orders
and a computation-free proof of their existence (see Theorem \ref{th-integral-octonions}).
\par
{\bf Acknowledgement:} I would like to thank Prof. Richard Borcherds and Prof. Daniel Allcock for encouraging
comments. I would specially like to thank Prof. Jonathan Smith for many interesting discussions and helpful suggestions.
\begin{topic}{\bf Dirac delta function:}
We begin with a little preliminary on the delta function $\delta_{x,y}$.
Let $\delta_{x} = \delta_{x,0}$.
Let $x_1, \dotsb, x_n$ be elements in an $\F_2$-vector space $V$.
Let $U = \sum_i \F_2 x_i$ and $\op{dim}_{\F_2}(U) = k$.
Let
$\op{ind} = \op{ind}_n : V^n \to \F_2$ be the function defined by
$\op{ind}_n(x_1 , \dotsb, x_n) = 1$
if and only if $k = n$.
For example,
$\op{ind}_1(x) = 1 + \delta_x$,
where the right hand side is considered as an element of $\F_2$.
We note the following useful formula for $\op{ind}_n(x_1, \dotsb, x_n)$:
\begin{equation}
\op{ind}_n(x_1, \dotsb, x_n)
= \sum_{\epsilon_1, \dotsb, \epsilon_n \in \F_2} \delta_{\epsilon_1 x_1 + \dotsb + \epsilon_n x_n}.
\label{eq-ind}
\end{equation}
To see why this is true, note that for each $u \in U$, the equation
$\sum_i \epsilon_i x_i    = u$ has $2^{n  - k}$ solutions for
$\epsilon_1, \dotsb, \epsilon_n$. So
the sum in \eqref{eq-ind} becomes $2^{n - k} \sum_{u \in U} \delta_u$.
\par
Let $\delta: \op{Fun}(V^k, \F_2) \to \op{Fun}( V^{k+1}, \F_2)$ be the coboundary map.
Note that
\begin{equation*}
(\delta \op{ind}_1)(x,y)  = 1 + \delta_x + \delta_y + \delta_{x +y}  =  \op{ind}_2(x,y) .
\end{equation*}
\end{topic}
Recall that we defined $\OO(R)$ as an $R$-algebra with basis $\lbrace e^x \colon x \in \F_8 \rbrace$
and multiplication defined by
\begin{equation}
e^x e^y = (-1)^{\vf(x,y)} e^{x  + y} \text{\; where \;} \vf(x,y) = \tr(y \bar{x} ) = \tr( y x^6).
\label{eq-def2}
\end{equation}
We record a few properties of the pairing $\vf$.
\begin{lemma} Let $x, y, z \in \F_8$. Then
\par
(a) $\vf(x, x) = x^7 =  1 + \delta_x = \op{ind}(x)$.
\par
(b)
$\vf(x, y) + \vf(y, x) = \tr(x \bar{y} + y \bar{x} ) = 1 + \delta_x + \delta_y + \delta_{x,y} = \op{ind}(x,y)$.
\par
(c) If $x, y , z \in \F_8^*$ such that $x  + y + z = 0$, then $\vf(x,y) = \vf(y,z) = \vf(z,x)$
\par
(d)  Assume  $z \in \F_8^*$. Then
$\vf(z x, z y) = \vf(x, y)$ and  $\vf(x^2 , y^2) = \vf( x, y)$.
In particular, if $\vf(x,y) = 0$, then $\vf(z x, z y) = 0$ and $\vf(x^2, y^2) = 0$.
\label{l-vf}
\end{lemma}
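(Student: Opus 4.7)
The plan is to reduce each claim to direct computation in $\F_8$, leveraging two basic facts: $x^7 = 1$ for every $x \in \F_8^*$, and the Frobenius $x \mapsto x^2$ commutes with $\tr$ while fixing $\F_2$ pointwise.

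Parts (a) and (d) should be immediate. For (a), $\vf(x,x) = \tr(x \cdot x^6) = \tr(x^7)$, and $x^7 \in \{0,1\} \subset \F_8$ equals $1-\delta_x$, whose trace equals itself. For (d), the identity $\vf(zx, zy) = \vf(x,y)$ comes from pulling $z^7 = 1$ out of $\tr(zy \cdot z^6 x^6)$; and $\vf(x^2, y^2) = \vf(x,y)$ follows once I observe that $\overline{x^2} = (\bar x)^2$ (treat $x = 0$ and $x \neq 0$ separately; in the latter case $x^{-2} = x^5 = (x^6)^2$), combined with the Frobenius-invariance $\tr((y\bar x)^2) = \tr(y\bar x)$.

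I expect the main obstacle to be part (b), which hinges on the cyclotomic identity $\tr(u) + \tr(u^{-1}) = 1$ for every $u \in \F_8^* \setminus \{1\}$. I would prove this by expanding $\tr(u) + \tr(u^6) = u + u^2 + u^3 + u^4 + u^5 + u^6 = (1 + u + \cdots + u^6) + 1 = 1$, since $u$ is a root of $(t^7-1)/(t-1)$. Given this identity, (b) collapses to a four-case check according to whether $x$, $y$, or $x+y$ vanishes; in the generic case one puts $u = y/x$ and applies the cyclotomic identity directly, while the degenerate cases make both sides vanish.

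Part (c) will then fall out of (b). Substituting $z = x + y$ gives $\vf(x,y) + \vf(y,z) = \tr(y/x) + \tr(x/y + 1)$; since $z \neq 0$ forces $y/x \neq 1$, this sum collapses to $\tr(u) + \tr(u^{-1}) + 1 = 0$ by the cyclotomic identity. Because the hypothesis $x + y + z = 0$ is cyclically symmetric in $(x, y, z)$, cycling the coordinates and rerunning the same argument yields $\vf(y,z) = \vf(z,x)$, closing the chain.
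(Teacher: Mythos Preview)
Your proposal is correct and matches the paper's approach: both isolate the identity $\tr(v)+\tr(v^{-1})=1$ for $v\in\F_8^*\setminus\{1\}$ as the crux of (b), derive it from $1+v+\cdots+v^6=0$ (you phrase this as ``$v$ is a root of $(t^7-1)/(t-1)$,'' the paper multiplies $1+\tr(v+v^{-1})=\sum_{t=0}^6 v^t$ by $v+1$), then read off (c) from this identity and (d) from Frobenius invariance. Your treatment of the degenerate cases in (b) and your explicit handling of (a) and (c) are more detailed than the paper's terse proof, but the underlying argument is the same.
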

\begin{proof}
Let $v \in \F_8^*$. Then
$(v + 1) (1 + \tr(v + v^{-1} ) ) = (v+ 1) \sum_{t = 0}^6 v^t =  v^7 + 1 =0$.
It follows that $\tr(v + v^{-1} ) = 1$ since $v \neq 1$.
So if $x,y$ are distinct elements of $\F_8^*$, then
$\vf(x,y) + \vf(y,x)  = \tr(y x^{-1} + x y^{-1}) = 1$.
Part (b) now follows. Part (c) follows form part (b).
Part (d) is immediate from
the definition of $\varphi$ and the Frobenius invariance of trace.
\end{proof}
For $x,y, z \in \F_8^*$ with $x + y + z = 0$,
 Lemma~\ref{l-vf}(a),(b),(c) implies respectively:
\begin{equation}
(e^x)^2 = -1,  \text{\; \;} e^x e^y = -e^y e^x, \text{\; and \;} e^x e^y  = e^y e^z = e^z e^x.
\label{eq-ex}
\end{equation}
Now choose a generator $\alpha$ for the multiplicative group $\F_8^*$ satisfying
$\alpha^3 = \alpha + 1$.
So
$\tr(\alpha) = \alpha + \alpha^2 + \alpha^4 = 0$.
Write
\begin{equation*}
e_{\infty} = e^{0}, e_1 = e^{\alpha^1}, \; \dotsb \;, e_j = e^{\alpha^j}, \; \dotsb, e_7 = e^{\alpha^7}.
\end{equation*}
\par
When $j \neq \infty$, we think of the subscript $j$ modulo $7$, that is,
$e_7 = e_0$, $e_8 = e_1$ and so on\footnote{This notation is consistent with
\cite{C-Sm:OQ} where the basis elements are named $e_{\infty}, e_1, \dotsb, e_6, e_0$. But
there is an unfortunate clash of notation with \cite{Ba:TO} because Baez calls the identity $e_0$.}.
Note that $\vf(\alpha,\alpha^2) = \tr(\alpha^8) = \tr(\alpha) = 0$
and $\alpha + \alpha^2 = \alpha^4$, which translates into
$e_1 e_2 = e_4$.
%
%
%
Lemma~\ref{l-vf}(d) implies that if $e^x e^y = e^{x + y}$, that is $\vf(x,y) = 0$, then
$e^{\alpha x} e^{\alpha y} = e^{ \alpha (x + y)}$ and
$e^{x^2} e^{y^2} = e^{(x + y)^2}$. This translates into:
\begin{lemma}
Let $i, j , k \in \Z/7\Z$.
If $e_i e_j = e_k,$ then $e_{i + 1} e_{j + 1} = e_{k+1}$
and $e_{2 i} e_{2 j} = e_{2 k}$.
\label{l-indtr}
\end{lemma}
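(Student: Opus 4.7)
The plan is to unfold the hypothesis $e_i e_j = e_k$ using the definition (\ref{eq-def2}) and then apply Lemma~\ref{l-vf}(d) directly. By definition,
\[
e_i e_j = e^{\alpha^i} e^{\alpha^j} = (-1)^{\vf(\alpha^i,\alpha^j)} e^{\alpha^i + \alpha^j}.
\]
For the right-hand side to equal $e_k = e^{\alpha^k}$, two things must hold simultaneously: the scalar must be $+1$, i.e.\ $\vf(\alpha^i,\alpha^j) = 0$; and the exponents must agree, i.e.\ $\alpha^i + \alpha^j = \alpha^k$. Both conditions transform nicely under the two operations we care about.

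For the shift $(i,j,k) \mapsto (i+1, j+1, k+1)$, I would multiply the exponent identity by $\alpha$ to get
\[
\alpha^{i+1} + \alpha^{j+1} = \alpha(\alpha^i + \alpha^j) = \alpha \cdot \alpha^k = \alpha^{k+1},
\]
and invoke Lemma~\ref{l-vf}(d) with $z = \alpha$ to conclude $\vf(\alpha^{i+1},\alpha^{j+1}) = \vf(\alpha \cdot \alpha^i, \alpha \cdot \alpha^j) = 0$. Combining,
\[
e_{i+1} e_{j+1} = (-1)^{0} e^{\alpha^{i+1}+\alpha^{j+1}} = e^{\alpha^{k+1}} = e_{k+1}.
\]

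For the doubling $(i,j,k) \mapsto (2i, 2j, 2k)$, I would apply the Frobenius $x \mapsto x^2$ to the exponent identity, using that squaring is additive in characteristic $2$:
\[
\alpha^{2i} + \alpha^{2j} = (\alpha^i + \alpha^j)^2 = (\alpha^k)^2 = \alpha^{2k}.
\]
The second half of Lemma~\ref{l-vf}(d) then gives $\vf(\alpha^{2i},\alpha^{2j}) = \vf((\alpha^i)^2,(\alpha^j)^2) = 0$, whence $e_{2i} e_{2j} = e^{\alpha^{2k}} = e_{2k}$.

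There is no real obstacle here: the lemma is essentially a restatement of Lemma~\ref{l-vf}(d) in the chosen indexing, once one notes that multiplication by $\alpha$ on $\F_8^\ast$ corresponds to the shift $i \mapsto i+1$ modulo $7$ and the Frobenius corresponds to the doubling $i \mapsto 2i$ modulo $7$. The only point to mention is that both identities, the scalar identity $\vf = 0$ and the exponent identity $\alpha^i + \alpha^j = \alpha^k$, must be preserved, and they are, by Lemma~\ref{l-vf}(d) and by the $\F_8$-linearity (resp.\ Frobenius additivity) of the exponent sum.
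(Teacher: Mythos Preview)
Your proof is correct and follows exactly the approach the paper indicates in the sentence immediately preceding the lemma: the hypothesis $e_i e_j = e_k$ unpacks to $\vf(\alpha^i,\alpha^j)=0$ together with $\alpha^i+\alpha^j=\alpha^k$, and then Lemma~\ref{l-vf}(d) (scaling by $\alpha$ and Frobenius invariance) transports both conditions to the shifted and doubled indices. You have simply written out in full what the paper summarizes in one line.
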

\begin{proof}[Proof of Theorem~\ref{t-O}]
From \cite[p.151]{Ba:TO} we know that
the relations in equation~\eqref{eq-ex} and Lemma~\ref{l-indtr}, together with the relations
$e_1 e_2 = e_4$ and $e_{\infty} e_j = e_j  = e_j e_{\infty}$ for $j \in \lbrace \infty, 1, \dotsb, 7 \rbrace$,
determine the multiplication table of $e_{\infty}, e_1, \dotsb, e_7$, and that this
agrees with the standard multiplication table of the octonions.
\end{proof}
Next, we derive the properties of the norm and the associator from our definition to
illustrate how the basic well known facts about
$\OO$ quickly follow from our definition.
One could go on in this vein and prove the Moufang law,
compute the derivations of $\OO$ and so on, but we shall omit these.
\begin{topic}{\bf Conjugate and norm: }
Let $a = \sum_{x \in \F_8} a_x e^x \in \OO$.
Write
$\op{Re}(a) = a_0 e^0$ and  $\op{Im}(a) = a - \op{Re}(a)$.
The {\it conjugate} of $a$ is $a^* = \op{Re}(a) - \op{Im}(a)$.
For $x \in \F_8$, we have
\begin{equation}
(e^x)^* = (-1)^{\delta_{x} + 1} e^x.
\label{eq-conj}
\end{equation}
Let $a, b \in \OO$.  We note three basic properties of conjugation:
\begin{equation}
(a b)^* = b^* a^*, \text{\;\;} a^{ ** } = a \text{\; and  \;} a^* = a \text{\; if and only if \;} a  = \op{Re}(a).
\label{eq-properties-of-conjugation}
\end{equation}
Because of $\R$-linearity, the identity $(a b)^* = b^* a^*$ needs to be
verified only for basis elements, and this is equivalent to Lemma~\ref{l-vf}(b).
The other two properties are clear from definitions.
The {\it trace} and {\it norm} of $a$ are defined as $\tr(a) = a + a^*$
and $N(a) = a a^*$ respectively.
Using \eqref{eq-properties-of-conjugation}, we have
$(a a^*)^* = a a^*$. So
$N(a) = a a^* = \op{Re}(a a^*) =  \sum_{x \in \F_8} a_x^2$.
So $a = 0$ if and only if $N(a) = 0$.
Now, let us verify that the norm is multiplicative. Let
$a = \sum_{x \in \F_8} a_x e^x$ and $b = \sum_{y \in \F_8} b_y e^y$
be two octonions.
From \eqref{eq-def2} and \eqref{eq-conj}, we compute
\begin{equation*}
N(ab) = \sum_{z,x,y} c_{x, y ,z}\,, \text{\; where \;} c_{x, y, z} =
a_x b_{z + x} a_y b_{z + y} (-1)^{\tr( z (\bar{x} + \bar{y})) + \delta_x + \delta_y}.
\end{equation*}
Using Lemma~\ref{l-vf},  we find
$c_{x,y,z + x + y}
=  (-1)^{ \tr(( x + y)(\bar{x} + \bar{y})) } c_{x,y,z}
= (-1)^{1 + \delta_{x,y} } c_{x,y,z}$.
So
\begin{equation*}
\sum_z c_{x, y , z} = \sum_{z} c_{x, y, z + x + y} = \sum_{z} - c_{x , y, z} \text{\; for \;} x \neq y.
\end{equation*}
So $\sum_{z} c_{x, y, z} = 0$ if $x \neq y$ and
$N(ab) = \sum_{x,z} c_{x,x,z} = \sum_{x, z} a_x^2 b_{z + x}^2 = N(a) N(b)$.
Now we have shown that $\OO$ is an $8$-dimensional ``real normed division algebra" with
$x^{-1} = x^*/N(x)$ for $x \in \OO - \lbrace 0 \rbrace$.
This gives another proof of Theorem~\ref{t-O}, since the octonions are known to be
the only such algebra (see \cite{Ba:TO}).
\end{topic}
\begin{topic}{\bf The associator:}
The {\it associator} $[a,b,c]$ of $a,b,c \in \OO$ is defined by
$[a,b,c] = (ab) c - a(bc)$.
Note that $e^x, e^y, e^z$ associate if and only the coboundary
\begin{equation*}
\delta \varphi(x,y,z) = \varphi(y,z) - \varphi(x + y, z) + \varphi(x, y + z) - \varphi(x,y)
= \varphi(y,z) - \varphi(x + y, z) + \varphi(x,z)
\end{equation*}
vanishes. Using Lemma~\ref{l-vf}(b) and equation \eqref{eq-ind} we find
\begin{equation}
\delta \varphi(x,y,z)  =  \op{ind}(x,y,z).
\label{eq-deltavarphi}
\end{equation}
So $(e^x e^y)e^z = \pm e^x (e^y e^z)$, where the plus sign holds if and only if
$\lbrace x,y,z \rbrace $ is a linearly dependent set, that is, $x,y,z$ all belong
to one of the 
copies of the quaternions in $\OO$.
Using \eqref{eq-def2} and \eqref{eq-deltavarphi},
and the fact that $\varphi(z,x) =\varphi(x,z) + 1$ if $x, y, z$
are independent, we get the following formula for the
associator:
\begin{equation}
[e^x,e^y,e^z] = (-1)^{\varphi(x,y)  + \varphi(y,z) + \varphi(z,x)} \bigl(1 -   (-1)^{\op{ind}(x,y,z)} \bigr) e^{x + y + z}.
\label{eq-associator}
\end{equation}
Equation \eqref{eq-associator} implies $[e^x,e^y,e^z]=-[e^x,e^z,e^y]$ and
 $[e^x,e^y,e^z]=[e^y,e^z,e^x]$ for all $x,y,z\in \F_8$.
 Using linearity, it follows that
$[a_{\sigma(1)} , a_{\sigma(2)} , a_{\sigma(3)} ] = \op{sign}(\sigma)[a_1, a_2, a_3]$
for all $a, b, c, \in \OO$ and for all permutations $\sigma \in S_3$.
In particular $[b,a,a] =  [a,a,b] = [a,b,a] = 0$ for all $a, b \in \OO$, so the octonions are {\it alternative}.
\end{topic}
\begin{remark}
There does not seem to be a definition of quaternion multiplication using the arithmetic of
$\F_4$ similar to \eqref{eq-def2}. If we take $\sigma: \F_4 \times \F_4 \to \F_2$ to be the function
$\sigma(x,y) = (-1)^{\tr(y x^2)}$, then $\Z_{\sigma}[\F_4]$
turns out to be the commutative associative algebra $\Z[x,y]/\langle x^2 - 1, y^2 - 1 \rangle$.
Of course the quaternions can be described as a twisted group algebra of $\F_2^2$,
see \cite{AM:QS}.
\end{remark}
\begin{topic}{\bf Orders of integral octonions: }
Let  $H$ denote the set of subsets of $\F_8$ having size $0$ or $4$ or $8$.
So $H  = \binom{\F_8}{4} \cup \lbrace \emptyset, \F_8 \rbrace$, where 
$\binom{\F_8}{j}$ is the set of subsets of $\F_8$ having size $j$.
\par
An octonion is called {\it integral} if it satisfies
a monic polynomial in $\Z[t]$.
Let $a = \sum_{x \in \F_8} a_x e^x \in \OO(\Q)$.
If $\op{Im}(a) \neq 0$, then the minimal polynomial of  $a$  is
$t^2 - \tr(a) t + N(a) \in \Q[t]$. So $a$ is integral if and only if
$\tr(a)$ and $N(a)$ belong to $\Z$.
Following \cite{C-Sm:OQ}, we define
the  {\it halving set} of $a$ 
to be the set of those $x \in \F_8$ for which $a_x \notin \Z$.
One observes immediately that if $a$ belongs to an order of integral octonions
containing $\OO(\Z)$,  then its halving set
lies in $H$, and $a_x$ is a half-integer for all $x$ in the halving set of $a$.
If $X \subseteq \F_8$, we define
\begin{equation*}
e^X = \sum_{x \in X} e^{x} \text{\; and \;} \sigma(X) = \sum_{x \in X} x.
\end{equation*}
\par
If $X \in H$, then  $e^X/2$ is an integral octonion with halving set $X$.
For example, suppose $\abs{X} = 4$. Write $a = e^X/2$.
  Then $N(a) = 1$, $\tr(a) = 1$
if $0 \in X$ and $\tr(a) = 0$ otherwise. 
So $a$ is an integral octonion
and
\begin{equation}
a^2 =
\begin{cases}
a - 1 & \text{\; if \;} 0 \in X \\
-1 & \text{\; otherwise}.
\end{cases}
\label{eq-unit-sq}
\end{equation}
For example, we could take $X$ to be a set of the form $\lbrace 0, x, y , z \rbrace$ where
$x + y + z = 0$. There are $7$ of these sets in $H$, and we call them {\it lines}
because they correspond to the lines of $P^2(\F_2)$.
The lines and line-complements are the size four subsets $X \subseteq \F_8$ such that
$\sigma(X) = 0$. Following \cite{C-Sm:OQ},
we shall call the remaining $56$ size four subsets {\it outer sets}.
The outer sets are a disjoint union of the following $7$ sets:
\begin{equation}
\mc{O}_x = \lbrace X \subseteq \F_8 \colon \abs{X} = 4, \sigma(X) = x \rbrace, \text{\; for \;} x \in \F_8^*.
\label{eq-def-Ox}
\end{equation}
\par
Let $E \subseteq \OO(\Q)$ be an order of integral octonions containing $\OO(\Z)$.
Let $a \in E$ with halving set $X$. Subtracting a suitable element of $\OO(\Z)$ from
$a$ we find that $e^X/2 \in E$. So $E$ contains the element
 $e^z ( e^X/2)$ which has halving set $z + X$. It follows that $E$ contains all
 octonions in $\tfrac{1}{2} \OO(\Z)$ whose halving sets are $\lbrace z + X \colon z \in \F_8 \rbrace$.
 So the halving sets of elements of $E$ must be a union of orbits of the translation action
 of the additive group of $\F_8$ on $H$.
The orbit space $\F_8 \backslash H$ has $16$ elements.
Two of the orbits are $\lbrace \emptyset \rbrace$
 and $\lbrace \F_8 \rbrace$. The lemma below describes the other fourteen orbits.
 \end{topic}
\begin{lemma}
(a) $X \in \binom{\F_8}{4}$ has nontrivial stabilizer in $\F_8$ if
and only if $X$ is a line or a line-complement.
\par
(b) $\binom{\F_8}{4}$ decomposes into seven orbits of size $2$ and seven orbits of size $8$.
\par
(c) For each line $L$, the set $\lbrace L, \F_8 - L \rbrace$ is an orbit of size $2$.
\par
(d) For each $x \in \F_8^*$, the set $\mc{O}_x$, defined in \eqref{eq-def-Ox} is
 an orbit of size $8$.
\label{l-translation}
\end{lemma}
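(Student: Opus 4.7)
The plan is to exploit that translation by elements of $(\F_8, +) \cong \F_2^3$ is an action of an $\F_2$-vector space, so for any $X \in \binom{\F_8}{4}$ the stabilizer $\op{Stab}(X)$ is automatically an $\F_2$-subspace of $\F_8$, and $X$ is a union of cosets of $\op{Stab}(X)$. Since $\abs{X} = 4$, the stabilizer can only have order $1$, $2$, or $4$.

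The key step is to rule out the order-$2$ case. If $\op{Stab}(X) = \lbrace 0, z \rbrace$, then $X = \lbrace a, a + z, b, b + z \rbrace$, where $\lbrace a, a + z \rbrace$ and $\lbrace b, b + z \rbrace$ are the two distinct cosets of $\lbrace 0, z \rbrace$ making up $X$. I would then observe that $c = a + b$ also stabilizes $X$, since adding $c$ swaps these two cosets; hence $\op{Stab}(X)$ contains the $\F_2$-subspace $\lbrace 0, z, c, c + z \rbrace$ of order $4$, a contradiction. Thus $\op{Stab}(X)$ is trivial or of order $4$, and in the latter case $X$ is a single coset of a $2$-dimensional subspace $V$: a line if $0 \in X$ and a line-complement if $0 \notin X$. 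This proves (a), and also (c) since $\lbrace L, \F_8 - L \rbrace$ is exactly the set of cosets of $L$ in $\F_8$.

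For (b), I would count: the $3$-dimensional $\F_2$-vector space $\F_8$ has seven $2$-dimensional subspaces, contributing $7$ lines and $7$ line-complements grouped into $7$ orbits of size $2$, accounting for $14$ of the $\binom{8}{4} = 70$ four-element subsets. By (a), the remaining $56$ subsets have trivial stabilizer, so by the orbit-stabilizer theorem they fall into $56/8 = 7$ orbits of size $8$.

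For (d), I would first note that $\sigma(z + X) = 4z + \sigma(X) = \sigma(X)$ in characteristic $2$, so each $\mc{O}_x$ is a union of translation-orbits. To show that each $\mc{O}_x$ \emph{is} a single orbit, I would use the auxiliary observation that multiplication by $c \in \F_8^*$ sends $X$ to $cX$ with $\sigma(cX) = c\, \sigma(X)$, giving a bijection $\mc{O}_x \to \mc{O}_{cx}$. Since $\F_8^*$ acts transitively on itself, the sizes $\abs{\mc{O}_x}$ for $x \in \F_8^*$ are all equal, forcing the common value to be $56/7 = 8$, which matches the size of a single orbit from (b). The only real obstacle is spotting the element $c = a + b$ that enlarges the supposedly order-$2$ stabilizer; the rest is pure counting.
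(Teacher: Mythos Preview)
Your proof is correct. For parts (a)--(c) your argument is essentially the paper's, phrased a bit more structurally: where you observe that $\op{Stab}(X)$ is an $\F_2$-subspace and that $X$ is a union of its cosets, the paper translates so that $0 \in X$ and does the same element chase directly (your extra stabilizing element $c = a+b$ is exactly the paper's observation that $x_1 + x_2 = x_3$ forces $X$ to be a line).

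For (d) there is a small but genuine difference. The paper argues that for any outer set $X$ the full orbit $\{z+X : z \in \F_8\}$ of size $8$ lies in $\mc{O}_{\sigma(X)}$, so each nonempty $\mc{O}_x$ has at least $8$ elements; since the seven $\mc{O}_x$ partition the $56$ outer sets, each has exactly $8$. You instead invoke the multiplicative action $X \mapsto cX$ of $\F_8^*$ to force all $|\mc{O}_x|$ to coincide. Your route uses the extra field structure of $\F_8$, which the paper's counting does not need; on the other hand, your argument does not require checking separately that every $\mc{O}_x$ is nonempty. Both are short and valid.
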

\begin{proof}
Suppose $X \in \binom{\F_8}{4}$ such that $ 0 \in X$ and there exists $z \in \F_8^*$
with $z + X = X$. Write $X =\lbrace 0, x_1, x_2, x_3 \rbrace$. Then
$0 \in z + X =  \lbrace z, z + x_1 , z + x_2, z + x_3 \rbrace$.
So, without loss, we may assume $z + x_1 = 0$, that is, $z = x_1$.
Now $x_1 + x_2 = z + x_2 \in X$, so we must have $x_1 + x_2 = x_3$, that is,
$X$ is a line.
This proves parts (a) and (b), and shows in particular that $\F_8$ acts freely on
the outer sets. Part (c) is immediate.
To parametrize the size $8$ orbits in $\binom{\F_8}{4}$, let $X$ be an outer set
and $x = \sigma(X)$.
Then $x  \in \F_8^*$. 
Note that $\sigma(z + X) = x$ for all $z \in \F_8$. So $\mc{O}_x$ contains the $8$ elements of
$\lbrace z + X \colon z \in \F_8 \rbrace$. But the seven sets of the form $\mc{O}_x$ together have $56$ elements.
It follows that the sets $\mc{O}_x$ are precisely  the size $8$ orbits in $\binom{\F_8}{4}$.
\end{proof}
Let $\mc{O}$ be an orbit of the $\F_8$-action on $H$.
Let $\op{span}(\mc{O})$ be the smallest subset of $2^{\F_8}$
that contains $\mc{O}$ and is closed under symmetric difference.
We shall soon see that $\op{span}(\mc{O}) \subseteq H$.
 Now we can state the theorem
describing the 16 orders of integral octonions containing $\OO(\Z)$.
\begin{theorem}
Consider the translation action of the additive group of $\F_8$ on
 $H = \lbrace X \subseteq \F_8 \colon \abs{X} \in \lbrace 0, 4, 8 \rbrace \rbrace$.
For each orbit $\mc{O} \in \F_8 \backslash H$, there is an order $\mathbb{E}_{\mc{O}}$
of integral octonions containing $\OO(\Z)$ that can be described in any of the following three way:
\par
(a)  $\mathbb{E}_{\mc{O}}$ is the  $\Z$-span of $\lbrace  e^z \colon z \in \F_8 \rbrace \cup \lbrace e^X/2 \colon X \in \mc{O} \rbrace$.
\par
(b) $\mathbb{E}_{\mc{O}}$ consists of all octonions in $\tfrac{1}{2} \OO(\Z)$ whose halving sets are in $\op{span}(\mc{O})$.
\par
(c) Pick $X \in \mathcal{O}$. Then $\mathbb{E}_{\mc{O}}$ is the smallest $\Z$-subalgebra of $\OO(\Q)$ containing
$\OO(\Z)$ and $e^X/2$.
\par
Thus, the  $16$ orders of integral octonions containing $\OO(\Z)$ are
in one to one correspondence with the sixteen orbits in $\F_8 \backslash H$.
In particular, for each $ x \in \F_8^*$, we have an maximal order
$E_{\mc{O}_x}$.
\label{th-integral-octonions}
\end{theorem}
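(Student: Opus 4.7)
The plan is to take description (a) as the definition of $\mathbb{E}_{\mc{O}}$ and prove that the other two descriptions agree with it, then show $\mathbb{E}_{\mc{O}}$ is a $\Z$-subalgebra of integral octonions, and finally (using the prior knowledge that only sixteen orders containing $\OO(\Z)$ exist) show that distinct orbits give distinct orders.

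First I would prove the combinatorial claim $\op{span}(\mc{O}) \subseteq H$, which is needed so that (b) is well-defined. The cases $\mc{O} \in \lbrace \lbrace \emptyset \rbrace, \lbrace \F_8 \rbrace, \lbrace L, \F_8 - L \rbrace \rbrace$ are immediate. For an outer orbit $\mc{O}_x$, write every element of the span as $Z_S := \triangle_{z \in S}(X_0 + z)$ for a fixed $X_0 \in \mc{O}_x$ and some $S \subseteq \F_8$. Since $\F_8$ has characteristic $2$, $\sigma(Z_S) = |S|\,x$, which is $0$ or $x$ according to the parity of $|S|$. For the size, the key observation is that for any size-$4$ subset $X$ and any $z \in \F_8^*$, the four cosets of $\lbrace 0, z \rbrace$ partition $\F_8$, and a double-counting using $|X| = 4$ shows $|X \triangle (X+z)| \in \lbrace 0, 4, 8 \rbrace$. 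An inductive argument combined with the sum constraint (or a direct enumeration of the $16$ elements of $\op{span}(\mc{O}_x)$) then gives $Z_S \in H$ in all cases.

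The equivalence (a) $\Leftrightarrow$ (b) is then formal: the halving set of $n + \tfrac{1}{2}\sum_i e^{X_i}$ (with $n \in \OO(\Z)$, $X_i \in \mc{O}$) is $\triangle_i X_i \in \op{span}(\mc{O})$; conversely, any $a \in \tfrac{1}{2}\OO(\Z)$ with halving set $Y = \triangle_i X_i \in \op{span}(\mc{O})$ satisfies $a - \tfrac{1}{2}\sum_i e^{X_i} \in \OO(\Z)$. Since $|Y| \in \lbrace 0, 4, 8 \rbrace$, such $a$ has integer trace and norm and is hence integral. I expect the main obstacle to be multiplicative closure of $\mathbb{E}_{\mc{O}}$. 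By bilinearity this reduces to showing $e^z(e^X/2)$, $(e^X/2)e^z$, and $(e^X/2)(e^Y/2)$ lie in $\mathbb{E}_{\mc{O}}$ for $z \in \F_8$ and $X, Y \in \mc{O}$. The first two are immediate from (b), since $e^z(e^X/2) = \tfrac{1}{2}\sum_{x \in X}(-1)^{\vf(z,x)}\,e^{z+x}$ has halving set $z + X \in \mc{O}$. For the product of two orbit generators, expand $(e^X e^Y)/4 = \tfrac{1}{4}\sum_w c_w\,e^w$ with $c_w = \sum_{x \in X,\, y \in Y,\, x+y = w}(-1)^{\vf(x,y)}$; one verifies each $c_w$ is even (ensuring the product is in $\tfrac{1}{2}\OO(\Z)$) and that the set of $w$ with $c_w/2$ odd lies in $\op{span}(\mc{O})$. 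The $\F_8$-invariance $\vf(ax, ay) = \vf(x, y)$ from Lemma~\ref{l-vf}(d) reduces the case analysis to one representative pair per orbit configuration, and the crucial sign cancellations come from Lemma~\ref{l-vf}(b,c).

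Equivalence with (c) is then quick: (c) $\subseteq$ (a) since $\mathbb{E}_{\mc{O}}$ is an algebra containing $\OO(\Z)$ and $e^X/2$; for the reverse, (c) contains $e^z(e^X/2)$, and adjusting by $\OO(\Z)$-elements recovers $\tfrac{1}{2} e^{z+X}$ for every $z \in \F_8$, giving all generators of (a). For the classification, the spans of the sixteen orbits are mutually distinct (each orbit $\mc{O}$ is contained in $\op{span}(\mc{O})$, and orbits are distinguished by the sums-in-$\F_8$ of their size-$4$ elements together with whether $\F_8$ itself is present), so the sixteen modules $\mathbb{E}_{\mc{O}}$ are sixteen distinct orders containing $\OO(\Z)$. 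Combined with the prior knowledge that only sixteen such orders exist, this gives the claimed bijection. Maximality of $\mathbb{E}_{\mc{O}_x}$ for $x \in \F_8^*$ follows because $\op{span}(\mc{O}_x)$ has $\F_2$-rank $4$, the maximum among the orbit spans.
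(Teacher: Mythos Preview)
Your proposal is correct and follows the same overall architecture as the paper: take (a) as the definition, identify (a) with (b) via halving sets and symmetric differences, establish multiplicative closure, and then deduce (c). The one substantive difference is your treatment of the product $(e^X/2)(e^Y/2)$ for $X,Y$ in the same orbit. You propose to expand it as $\tfrac{1}{4}\sum_w c_w\,e^w$ and check, after reducing by the $\F_8^*$-symmetry of $\vf$, that each $c_w$ is even and that the resulting halving set lies in $\op{span}(\mc{O})$. This works but still requires a sign computation. The paper's argument avoids this entirely: since $X,Y$ lie in one $\F_8$-orbit, $Y=z+X$ for some $z$, so $e^Y/2=(e^X/2)e^z+b$ with $b\in\OO(\Z)$; setting $a=e^X/2$ and using alternativity gives
\[
(e^X/2)(e^Y/2)=a(ae^z)+ab=a^2e^z+ab,
\]
and $a^2\in\{-1,\,a-1\}$ by the earlier formula \eqref{eq-unit-sq}, so both summands lie in $\mathbb{E}\cdot\OO(\Z)=\mathbb{E}$. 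Your route is self-contained and does not appeal to alternativity (which the paper derives elsewhere); the paper's trick is what makes the proof genuinely ``computation-free'' and shows clearly why being in a single translation orbit is exactly the right hypothesis.
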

\begin{remark}
\cite{C-Sm:OQ} gives names for the $16$ octonion orders containing $\OO(\Z)$.
They correspond to  the sixteen orbits in $\F_8 \backslash H$ as follows:
The orbit $\lbrace \emptyset \rbrace$ corresponds to the Gravesian octaves $\OO(\Z)$.
The orbit $\lbrace \F_8 \rbrace$ corresponds to the Kleinian octaves.
The size $2$ orbits correspond to the double Hurwitzian rings.
 Finally, the orbits $\mathcal{O}_x$ of size $8$ correspond to the maximal orders
of octavian rings.
Let $j \in \lbrace 0, 1, \dotsb, 6\rbrace$.  In \cite{C-Sm:OQ}, 
the sets in  $\mathcal{O}_{\alpha^j}$ are called the
``outer $j$-sets" and the elements of the corresponding order are called ``$j$-integers".
\end{remark}
Before proving Theorem~\ref{th-integral-octonions}, we shall describe the sets $\op{span}(\mc{O})$.
First, define 
\begin{equation*}
\beta: 2^{\F_8} \times 2^{\F_8} \to \F_2 \text{\; by \;}
\beta(X,Y) = \abs{ X \cap Y} \bmod 2.
\end{equation*}
Let $\Delta$ denote the operation of symmetric difference of sets.
Identify the additive group $(2^{\F_8}, \Delta)$ with the additive group of the vector space $\F_2^8$.
Then $\beta$ is identified with the  standard symmetric (or alternating) bilinear form on $\F_2^8$.
If $M \subseteq 2^{\F_8}$, we shall write $M^{\bot}$ to be the orthogonal complement
of $M$ with respect to $\beta$.
\par
Let $\mc{O}$ be one of the sixteen orbits in $\F_8 \backslash H$ described above.
Then $\op{span}(\mc{O})$ is just the linear span of $\mc{O}$ in the vector space $2^{\F_8}$.
We immediately observe that if
$\mc{O} = \lbrace \emptyset \rbrace$, 
then $\op{span}(\mc{O}) = 
\{\emptyset\}
$,
if $\mc{O} = \lbrace \F_8 \rbrace$, then $\op{span}(\mc{O}) = \lbrace \emptyset, \F_8\rbrace$,
and
if $\mc{O}$ is an orbit of size $2$, then $\op{span}(\mc{O}) = \mc{O} \cup \lbrace \emptyset, \F_8\rbrace$. So $\op{span}(\mc{O})$ is a subspace of $2^{\F_8}$
of dimension $0$, $1$, $2$
respectively in these three cases.
The lemma below describes $\op{span}(\mc{O})$ for the orbits of size $8$.
\begin{lemma}
Let $z \in \F_8^*$ and let $\mc{O}_z$ be an orbit of size $8$ described in Lemma~\ref{l-translation}.
\par
(a) Let $X, Y \in \mc{O}_z$ such that $X$ is not equal to $Y$ or $\F_8 - Y$.
Then $\abs{X \cap Y} = 2$ and $X \Delta Y$ is a line containing $z$ or complement of such a line.
\par
(b) $\op{span}(\mc{O}_z)$ is a four dimensional subspace of $2^{\F_8}$
consisting of $\mc{O}_z \cup \lbrace \emptyset, \F_8\rbrace$
and the three lines containing $z$ and their complements.
\par
(c) $\op{span}(\mc{O}_z)$ is maximal isotropic in $2^{\F_8}$, that is,
$\op{span}(\mc{O}_z) = \op{span}(\mc{O}_z)^{\bot}$.
\par
(d) Let $X \subseteq \F_8$. Then $X \in \op{span}(\mc{O}_z)$ if and only if
$\abs{X \cap A} \equiv 0 \bmod 2$ for all $A \in \op{span}(\mc{O}_z)$.
\label{l-maximal-orbits}
\end{lemma}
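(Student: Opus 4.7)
The plan is to prove part (a) by a direct counting argument on intersection sizes, then deduce (b), (c), (d) by exhibiting $16$ elements of $\op{span}(\mc{O}_z)$ and using a dimension count against the bilinear form $\beta$.

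For part (a), I fix $X, Y \in \mc{O}_z$ with $X \neq Y$ and $Y \neq \F_8 - X$. Then $\abs{X \Delta Y} = 8 - 2\abs{X \cap Y}$ and $\sigma(X \Delta Y) = \sigma(X) + \sigma(Y) = 0$. In characteristic $2$, no $2$-element subset of $\F_8$ has zero sum (that would force $a = b$), and no $6$-element subset does either (they are complements of $2$-subsets), so the possibilities $\abs{X \cap Y} \in \{1, 3\}$ are ruled out; since $X \neq Y$ and $Y \neq \F_8 - X$ exclude $\abs{X \cap Y} \in \{0, 4\}$, I conclude $\abs{X \cap Y} = 2$ and $\abs{X \Delta Y} = 4$, and then Lemma~\ref{l-translation}(b) forces $X \Delta Y$ to be a line or line-complement. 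To see that the underlying line contains $z$, I write $X \cap Y = \{a, b\}$, $A = X - Y$, $B = Y - X$, so $A \sqcup B = X \Delta Y$ is this line or its complement, and $\sigma(X) = z$ rearranges to $z = (a + b) + \sigma(A)$. In any two-dimensional $\F_2$-affine subspace of $\F_8$, the pair sums of two-element subsets are precisely the three nonzero elements of the linear direction; both $a + b$ (a pair sum from $\F_8 - (X \Delta Y)$) and $\sigma(A)$ (a pair sum from $X \Delta Y$) therefore lie in this common direction, forcing $z$ into it.

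For part (b), part (a) implies $\abs{X \cap Y} \in \{0, 2, 4\}$ for all $X, Y \in \mc{O}_z$, so $\mc{O}_z$ is self-orthogonal under $\beta$, and by bilinearity $\op{span}(\mc{O}_z) \subseteq \op{span}(\mc{O}_z)^{\bot}$, hence $\dim \op{span}(\mc{O}_z) \le 4$. To match this I exhibit $16$ distinct elements of $\op{span}(\mc{O}_z)$: the $8$ elements of $\mc{O}_z$; $\emptyset = X \Delta X$; $\F_8 = X \Delta (\F_8 - X)$ (since $\F_8 - X \in \mc{O}_z$); the $3$ lines through $z$, each realized as some $X \Delta Y$ via the construction in part (a); and the $3$ complements of such lines, obtained as $\F_8 \Delta L$ for each line $L$ through $z$. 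These sets are pairwise distinct (distinguished by size, by $\sigma$, and by membership of $0$), so $\dim \op{span}(\mc{O}_z) \ge 4$, yielding equality and identifying $\op{span}(\mc{O}_z)$ with this $16$-element set.

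Parts (c) and (d) follow at once. An isotropic subspace of dimension $4$ in the nondegenerate $8$-dimensional $\beta$-space $2^{\F_8}$ is maximal isotropic, giving (c). Part (d) is then immediate from the definition of $\op{span}(\mc{O}_z)^{\bot}$ combined with (c). The main obstacle is the geometric step in part (a) that shows the resulting line contains $z$; the argument is short, but it relies on the uniform observation that every pair sum of a two-element subset of a two-dimensional $\F_2$-subspace of $\F_8$ is a nonzero element of that subspace, which is what couples the identity $z = (a+b) + \sigma(A)$ to membership in the line.
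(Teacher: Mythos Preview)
Your argument is essentially correct and differs from the paper's in two places worth noting.

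In part (a) the paper translates so that $0 \in X \cap Y$, writes $Y = x_1 + X$ explicitly, and computes $X \cap Y = \{0,x_1\}$ and $\F_8 - (X \Delta Y) = \{0, x_1, x_2+x_3, z\}$ by hand. Your route via $\sigma(X \Delta Y) = 0$ together with the observation that no $2$- or $6$-element subset of $\F_8$ has zero sum is cleaner; and your coset argument for $z \in L$ (pair sums inside an affine $\F_2$-plane lie in its linear direction) is more conceptual than the paper's explicit identification. One small citation slip: the statement that a size-$4$ subset with $\sigma = 0$ is a line or line-complement appears in the text just before Lemma~\ref{l-translation}, not in part (b) of that lemma.

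In part (b) the paper argues the dimension bound first ($|\mc{O}_z \cup \{\emptyset\}| = 9 > 8$ already gives $\dim \ge 4$, hence $=4$), and then obtains all three lines through $z$ by invoking the stabilizer of $z$ in $\op{GL}_3(\F_2)$, which preserves $\mc{O}_z$ and permutes those three lines transitively. Your approach avoids the group action but leaves a small gap: you assert that each of the three lines through $z$ is realized as some $X \Delta Y$, whereas part (a) only tells you that each such $X \Delta Y$ is \emph{some} line through $z$ or its complement. The fix is a one-line pigeonhole: for fixed $X$, the map $Y \mapsto X \Delta Y$ is injective on the six elements of $\mc{O}_z \setminus \{X, \F_8 - X\}$, yielding six distinct lines-through-$z$-or-complements, and there are exactly six such sets, so all of them occur. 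With that sentence added, your enumeration is complete and your derivation of (c) and (d) goes through as written.
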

\begin{proof}
Let $X, Y$ be as in part (a).
Translating by an element of $\F_8$ if necessary, assume first that $0 \in X \cap Y$.
Write $X = \lbrace 0, x_1, x_2, x_3 \rbrace$ and $Y = x + X$ for some $x \in \F_8^*$.
Note that $\sigma (X) = x_1 + x_2 + x_3 = z$.
Since $0 \in Y$, without loss, we may suppose that $ x + x_1 = 0$,
that is, $x = x_1$.
Then $Y = \lbrace x_1, 0, x_1 + x_2, x_1 + x_3 \rbrace$.
Now note that $(x_1 + x_2) \notin X$, since $X$ is not a line. For the same reason
$(x_1 + x_3) \notin X$.
It follows that $ X \cap  Y = \lbrace 0, x_1 \rbrace$ and that
$\F_8 - (X \Delta Y) = \lbrace 0, x_1, x_2 + x_3, z\rbrace$ is a line containing $z$. This proves part (a).
\par
Part (a) implies that if $X , Y \in \mc{O}_z$, then $\abs{X \cap Y}$ is even. In other words,
$\mc{O}_z \subseteq \mc{O}_z^{\bot} $.
It follows that $\op{span}(\mc{O}_z) \subseteq \op{span}(\mc{O}_z)^{\bot}$.
So $\op{span}(\mc{O}_z)$ is at most $4$ dimensional.
On the other hand, $\op{span}(\mc{O}_z)$ clearly has more than $8$ elements. It follow that
$\op{span}(\mc{O}_z)$ has dimension $4$, and hence it is
maximal isotropic.  This proves part (c). Part (d) is just a restatement of part (c).
\par
Since $\op{span}(\mc{O}_z)$ has $16$ elements, it suffices to show that
it contains the $16$ sets described in part (b).
From part (a) we know that $\op{span}(\mc{O}_z)$ contains
a line through $z$. Consider the action of $\op{GL}_3(\F_2)$ on $\F_8 \cong \F_2^3$.
The subgroup of $\op{GL}_3( \F_2)$ fixing $z$
preserves $\mc{O}_z$, and permutes the three lines through $z$. So
$\op{span}(\mc{O}_z)$ contains the three lines through $z$, and hence also their complements.
Part (b) follows.
\end{proof}
\begin{proof}[Proof of Theorem~\ref{th-integral-octonions}] Fix $x \in \F_8^*$. We shall prove the claims when
$\mc{O} = \mc{O}_x$ and $\mathbb{E}_{\mc{O}} = \mathbb{E}_{\mc{O}_x}$.
The other cases are similar and easier.
Fix $X \in \mathcal{O}_x$.
Let $\mathbb{E}$, $\mathbb{E}'$, $\mathbb{E}''$ respectively be the three
sets described in parts (a), (b) and (c) of the theorem.
By definition $\mathbb{E}''$ is a subring of $\OO(\Q)$, and
$\mathbb{E}'$ consists of integral octonions and contains $\OO(\Z)$. So it suffices to
show that $\mathbb{E}= \mathbb{E}' =  \mathbb{E}''$.
\par
Let $Y \in \mathcal{O}_x$. Then $ Y = y + X$ for
some $y \in \F_8$. Let $z \in \F_8$. We have
\begin{equation}
( e^Y /2) e^z= \tfrac{1}{2} \sum_{x \in X} \pm e^{x + y +  z} \equiv \tfrac{1}{2} e^{z + Y} \bmod \OO(\Z).
\label{eq-eYz}
\end{equation}
From \eqref{eq-eYz}, we find $\mathbb{E} . \OO(\Z) = \mathbb{E}$ and similarly $\OO(\Z) . \mathbb{E} =\mathbb{E}$.
Equation \eqref{eq-eYz} also implies $(e^{z + X}/2) = (e^{X} /2)e^z + c$ for some $c \in \OO(\Z)$.
So $\mathbb{E} \subseteq \mathbb{E}''$.
\par
Now let $Y, Z \in \mathcal{O}_x$. 
Then $Z = z + Y$ for some $z \in \F_8$. Write $a = e^Y/2$.
Then \eqref{eq-eYz} implies $e^{Z} /2=a e^z + b$ for some $b\in \OO(\Z)$.
From \eqref{eq-unit-sq}, note that $a^2 \in \mathbb{E}$.
Since $\OO$ is alternative, we get
\begin{equation*}
(e^Y/2 )(e^Z/2) = a^2 e^z + a b \in
\mathbb{E} .\OO(\Z) + \mathbb{E}. \OO(\Z) = \mathbb{E}.
\end{equation*}
So $\mathbb{E} . \mathbb{E} = \mathbb{E}$.
Since $\mathbb{E} \subseteq \mathbb{E}''$, it follows that $\mathbb{E} = \mathbb{E}''$.
The equality $\mathbb{E} = \mathbb{E}'$ follows
once we observe that if $u, v \in \mathbb{E}$, then the halving set of $u+v$ is the
symmetric difference of the halving sets of $u$ and $v$.
\end{proof}
\begin{remark}
Let $\mc{O}$ be one of the size $8$ orbits described in Lemma~\ref{l-translation}. Let $\mathbb{E}_{\mc{O}}$ be
the corresponding maximal order of integral octonions.
Equip the $\Z$-module $\mathbb{E}_{\mc{O}}$ with the bilinear form
$\langle x , y \rangle = \tr(x y^*)$. Then $\langle x, x \rangle = 2 N(x) \in 2 \Z$
for all $x \in \mathbb{E}_{\mc{O}}$.
So $\mathbb{E}_{\mc{O}}$ is an even integral lattice of rank $8$.
\par
Let $a, a' \in \tfrac{1}{2} \OO(\Z)$ with halving sets $X$ and $X'$ respectively.
Observe that $\tr(a a') \bmod \Z = \tfrac{1}{2} \abs{X \cap X'} \bmod \Z$.
In particular, if $a a'$ is an integral octonion, then we must have $\abs{X \cap X'} \equiv 0 \bmod 2$.
In view of this observation, self-duality of the code $\op{span}(\mc{O})$
(see Lemma~\ref{l-maximal-orbits}(d))
 immediately translates into self-duality of the even integral lattice $\mathbb{E}_{\mc{O}}$,
 and also implies that $\mathbb{E}_{\mc{O}}$  is a maximal order of integral octonions.
 In particular, we find that $\mathbb{E}_{\mc{O}}$ with the bilinear form 
$\langle x, y \rangle =  \tr(x y^*)$ 
 is the unique even integral self dual lattice $E_8$.
\end{remark}
\begin{remark}
Given a doubly even code $V$ and a ``factor set" $\varphi: V \times V \to \F_2$, Griess defined a Moufang loop structure on
$\F_2 \times V$ called a code loop (see \cite{Gr:CL}).
Code loops were defined in an effort to understand a Moufang loop discovered by Parker
and used by Conway to construct the monster simple group. When $V$ is the extended Golay code,
the code loop construction yields the Parker loop.
\par
In \eqref{eq-def2}, we have defined a Moufang loop structure on $\lbrace \pm e^x \colon x \in \F_8 \rbrace$. This loop is
 called the octonion loop.  Our definition of the octonion loop is very similar to the definition of a code loop.
The function $\op{ind}(x) = 1 + \delta_x$ plays the role of the 
function $q(x) = \tfrac{1}{4} \abs{x}$ used in the definition of a factor set.
Equation \eqref{eq-ind} gives us $(d \op{ind}) (x,y) = \op{ind}_2(x,y)$, 
$(d \op{ind})(x,y,z) = \op{ind}_3(x,y,z)$ and so on, where 
$d$ denotes the combinatorial polarization map 
introduced in \cite{Wa:CP}.
This makes Lemma \ref{l-vf}(a),(b) and equation \eqref{eq-deltavarphi} exactly analogous to the
the defining properties of a factor set (see \cite{Gr:CL}; definition 6 on page 225 and equation (*) on page 230).
\end{remark}
\end{document}